\documentclass[10pt,bezier]{article}
\usepackage{amsmath,amssymb,amsfonts,graphicx,caption,subcaption}
\usepackage[colorlinks,linkcolor=red,citecolor=blue]{hyperref}

\textwidth = 16.4 cm 
\textheight = 21.5 cm 
\oddsidemargin = 0 cm
\evensidemargin = 0 cm 
\topmargin = -0.7 cm
\parskip = 2.5 mm
\newtheorem{prethm}{{\bf Theorem}}[section]
\newenvironment{thm}{\begin{prethm}{\hspace{-0.5
em}{\bf.}}}{\end{prethm}}
\newtheorem{prepro}{{\bf Theorem}}

\newtheorem{precor}[prethm]{{\bf Corollary}}
\newenvironment{cor}{\begin{precor}{\hspace{-0.5
em}{\bf.}}}{\end{precor}}
\newtheorem{preconj}[prethm]{{\bf Conjecture}}

\newtheorem{preremark}[prethm]{{\bf Remark}}
\newenvironment{remark}{\begin{preremark}\em{\hspace{-0.5
em}{\bf.}}}{\end{preremark}}
\newtheorem{prelem}[prethm]{{\bf Lemma}}
\newenvironment{lem}{\begin{prelem}{\hspace{-0.5
em}{\bf.}}}{\end{prelem}}
\newtheorem{preque}[prethm]{{\bf Question}}

\newtheorem{preobserv}[prethm]{{\bf Observation}}

\newtheorem{preproposition}[prethm]{{\bf Proposition}}

\newtheorem{preproof}{{\bf Proof.}}
\newtheorem{preprooff}{{\bf Proof}}

\newenvironment{proof}[1]{\begin{preproof}{\rm
#1}\hfill{$\Box$}}{\end{preproof}}

\newtheorem{preproofs}{{\bf Second proof of }}

\newtheorem{preprooft}{{\bf Third proof of }}

\newtheorem{preproofF}{{\bf Proof of}}

\title{\bf\Large 
A necessary and sufficient condition for the existence of $\{p,p+1,q-1,q\}$-orientations in simple graphs
}
\author{{\normalsize{\sc Morteza Hasanvand${}$} }\vspace{3mm}
\\{\footnotesize{${}$\it Department of Mathematical
 Sciences, Sharif
University of Technology, Tehran, Iran}}
{\footnotesize{}}\\{\footnotesize{ $\mathsf{morteza.hasanvand@alum.sharif.edu }$ }}}
\date{}
\def\p {p}
\def\q {q}
%
\begin{document}
\maketitle
\begin{abstract}{ 
Let $G$ be a simple graph and let $\p$ and $\q$ be two integer-valued functions on $V(G)$ with $p< q$ in which for each $v\in V(G)$, $q(v) \ge \frac{1}{2}d_G(v)$ and $\p(v) \ge \frac{1}{2} \q(v)-2$. In this note, we show that $G$ has an orientation such that for each vertex $v$, $d^+_G(v)\in\{\p(v),p(v)+1,q(v)-1,\q(v)\}$ if and only if it has an orientation such that for each vertex $v$, $p(v) \le d^+_G(v)\le \q(v)$ where $d^+_G(v)$ denotes the out-degree of $v$ in $G$. From this result, we refine a result due to Addario-Berry, Dalal, and Reed (2008) in bipartite simple graphs on the existence of degree constrained factors.
\\\\
\noindent {\small {\it Keywords}:
\\
Orientation;
out-degree; factor;
degree;
bipartite graph.
}} {\small
}
\end{abstract}
%
%
%
%
%
%
%
%
%
%
%
%
%
%
\section{Introduction}
In this note, graphs have no loops, but multiple edges are allowed, and a simple graph have neither multiple edges nor loops.
Let $G$ be a graph. 
The vertex set and the edge set of $G$ are denoted by $V(G)$ and $E(G)$, respectively.
We denote by $d_G(v)$ the degree of a vertex $v$ in the graph $G$, whether $G$ is directed or not.
If $G$ has an orientation $D$, the out-degree and in-degree of $v$ are denoted by $d_D^+(v)$ and $d_D^-(v)$; when $D$ is clear from the context, we only write $d_G^+(v)$ and $d_G^-(v)$.
We denote by $G[A]$ the induced subgraph of $G$ with the vertex set $A$ containing
precisely those edges of $G$ whose ends lie in $A$.
Likewise, we denote by $D[A]$ the induced subdigraph of $D$ with the vertex set $A$ containing
precisely those edges of $D$ whose ends lie in $A$.
Let $L:V(G)\rightarrow 2^\mathbb{Z}$ be a function.
An orientation of $D$ of $G$ is said to be 
(i)
{\it $L$-orientation}, if for each vertex $v$, $d^+_D(v)\in L(v)$, (ii) {\it $(\p,\q)$-orientation}, if for each vertex $v$, $\p(v)\le d^+_D(v)\le \q(v)$,
where $\p$ and $\q$ are two integer-valued functions on $V(G)$.
Likewise, a factor $F$ of the graph $G$ is said to be {\it $L$-factor}, if for each vertex $v$, $d_F(v)\in L(v)$, (ii) {\it $(g,f)$-factor}, if for each vertex $v$, $g(v)\le d_F(v)\le f(v)$, where $g$ and $f$ are two integer-valued functions on $V(G)$.
%
%
%
%
%
%
%
%
%

In 1976 Frank and Gy{\'a}rf{\'a}s formulated the following a criterion for the existence of $(p,q)$-orientations which generalizes a result of Hakimi~\cite{Hakimi-1965} who gave a criterion for the existence of orientations with given upper bound on out-degrees.
\begin{thm}{\rm (\cite{Frank-Gyarfas-1976})}\label{thm:Into:Frank-Gyrfas}
{Let $G$ be a graph and let $\p$ and $\q$ be two integer-valued function on $V(G)$ with $\p\le \q$.
Then $G$ has a $(p,q)$-orientation if and only if for all $S\subseteq V(G)$,
$$e_G(S)\le \min\{\sum_{v\in S}q(v),\, \sum_{v\in S}(d_G(v)-\p(v))\}.$$ 
}\end{thm}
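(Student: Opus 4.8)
The plan is to prove necessity by a short counting argument and sufficiency by a potential-function/path-reversal argument on orientations.

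\textbf{Necessity.} Suppose $D$ is a $(p,q)$-orientation and fix $S\subseteq V(G)$. Each edge with both ends in $S$ contributes $1$ to the out-degree (in $D$, hence in $G$) of its tail, which lies in $S$; summing over $S$ gives $e_G(S)\le \sum_{v\in S}d^+_D(v)\le \sum_{v\in S}q(v)$. Applying the same inequality to the reverse orientation $\overline{D}$, which satisfies $d^+_{\overline D}(v)=d_G(v)-d^+_D(v)$, gives $e_G(S)\le \sum_{v\in S}(d_G(v)-d^+_D(v))\le \sum_{v\in S}(d_G(v)-p(v))$. Together these yield the stated upper bound on $e_G(S)$.

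\textbf{Sufficiency.} Assume the condition holds for every $S$. Among all (finitely many) orientations $D$ of $G$, I would choose one minimizing the total violation
$$\Phi(D)=\sum_{v\in V(G)}\Big(\max\{0,\,d^+_D(v)-q(v)\}+\max\{0,\,p(v)-d^+_D(v)\}\Big).$$
If $\Phi(D)=0$ then $D$ is a $(p,q)$-orientation and we are done, so assume $\Phi(D)>0$; then either some vertex has out-degree above $q$ or some vertex has out-degree below $p$. In the first case, pick $v_0$ with $d^+_D(v_0)>q(v_0)$ and let $A$ be the set of vertices reachable from $v_0$ by directed paths in $D$. If some $w\in A$ has $d^+_D(w)<q(w)$, reverse a shortest directed $v_0$--$w$ path $P$: this decreases $d^+_D(v_0)$ by $1$, increases $d^+_D(w)$ by $1$, and (by simplicity of $P$) leaves all other out-degrees unchanged; a short check of the relevant cases shows $\Phi$ strictly decreases, contradicting minimality. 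Hence every $w\in A$ satisfies $d^+_D(w)\ge q(w)$. Since no arc leaves $A$, every out-arc of an $A$-vertex has both ends in $A$, so $\sum_{w\in A}d^+_D(w)=e_G(A)$, and with $d^+_D(v_0)>q(v_0)$ this gives $e_G(A)>\sum_{w\in A}q(w)$, contradicting the hypothesis with $S=A$. The second case (some $v_0$ with $d^+_D(v_0)<p(v_0)$) is symmetric under $D\mapsto\overline D$, $p\mapsto d_G-q$, $q\mapsto d_G-p$: taking $B=\{u:\ v_0\text{ is reachable from }u\text{ in }D\}$, either a shortest $w$--$v_0$ path with $d^+_D(w)>p(w)$ can be reversed to decrease $\Phi$, or no arc enters $B$ from outside, whence $\sum_{w\in B}d^-_D(w)=e_G(B)$, so $\sum_{w\in B}d^+_D(w)=\sum_{w\in B}d_G(w)-e_G(B)<\sum_{w\in B}p(w)$, i.e. $e_G(B)>\sum_{w\in B}(d_G(w)-p(w))$, again contradicting the hypothesis.

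The step needing the most care is the verification that each path reversal does not increase $\Phi$: one must insist the reversed path is simple (a shortest directed path works) so that only the two endpoint out-degrees change, and then check the sub-cases according to where $d^+_D(w)$ lies relative to $p(w)$ and $q(w)$ and where $d^+_D(v_0)$ lies relative to $p(v_0),q(v_0)$ (here $p\le q$ is used). A more modular alternative, at the cost of invoking Hakimi's theorem, is to first fix an orientation with $d^+_D\le q$ everywhere (possible since $e_G(S)\le\sum_{v\in S}q(v)$), then minimize only $\sum_v\max\{0,p(v)-d^+_D(v)\}$ among such orientations and run the deficiency argument above, noting the relevant reversals preserve $d^+_D\le q$. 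Yet another route is to model the problem as a feasible $b$-flow — one unit per edge routed to its chosen tail, with lower bound $p(v)$ and upper bound $q(v)$ of total flow at $v$ — and invoke the Gale--Hoffman feasibility criterion; the sets $A$ and $B$ above are precisely the violated cuts it produces.
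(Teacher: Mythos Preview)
The paper does not prove this theorem at all: it is stated in the introduction as a cited result of Frank and Gy\'arf\'as (1976), with no proof supplied. There is therefore nothing in the paper to compare your proposal against.

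For what it is worth, your argument is a correct and standard proof. Necessity is the obvious double-count. For sufficiency, your potential-minimizing/path-reversal argument is sound: with $d^+_D(v_0)>q(v_0)$ and $d^+_D(w)<q(w)$, reversing a simple directed $v_0$--$w$ path changes only the two endpoint out-degrees, so the excess at $v_0$ drops by one while the contribution at $w$ cannot increase (its excess stays zero since $d^+_D(w)+1\le q(w)$, and its deficiency is nonincreasing), hence $\Phi$ strictly decreases. If no such $w$ exists in the reachable set $A$, your count $e_G(A)=\sum_{w\in A}d^+_D(w)>\sum_{w\in A}q(w)$ is correct because no arc leaves $A$. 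The symmetric case is handled by the reversal $D\mapsto\overline D$ exactly as you say. The alternative routes you mention (first enforcing $d^+_D\le q$ via Hakimi and then repairing deficiencies, or the Gale--Hoffman flow formulation) are also valid and well known.
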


Recently, the present author introduced the following criterion for the existence of $\{\p, \q\}$-orientations in highly edge-connected graphs. In this note, we prove that under some conditions, as mentioned in the abstract, a simple graph has a $\{\p,p+1,q-1,\q\}$-orientation if and only if it has a $(p,q)$-orientation.
\begin{thm}{\rm (\cite{p-q})}\label{thm:Into:g,f-orientation}
{Let $G$ be a $8k^2$-edge-connected graph and let $p$ and $q$ be two integer-valued functions on $V(G)$ in which for each vertex $v$, $p(v)\le d_G(v)/2\le q(v)$ and $|q(v)-p(v)|\le k$.
Then $G$ has a $\{p,q\}$-orientation if and only if there is an integer-valued function $t$ on $V(G)$ such that $|E(G)|=\sum_{v\in V(G)}t(v)$ and $t (v)\in \{p(v),q(v)\}$ for each $v\in V(G)$.
}\end{thm}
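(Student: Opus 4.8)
The converse direction is immediate: if $D$ is a $\{p,q\}$-orientation then $t(v):=d^+_D(v)\in\{p(v),q(v)\}$, and $\sum_{v\in V(G)}t(v)=|E(G)|$ because each edge contributes exactly one to the sum of out-degrees. So the whole content lies in the forward direction, for which I would exploit the $p=q$ special case of Theorem~\ref{thm:Into:Frank-Gyrfas}: an integer-valued $x$ on $V(G)$ is the out-degree sequence of some orientation of $G$ precisely when $\sum_{v\in V(G)}x(v)=|E(G)|$ and $e_G(S)\le\sum_{v\in S}x(v)$ for every $S\subseteq V(G)$. I will call such an $x$ \emph{feasible}; equivalently the feasible functions are the integer points of the base polytope of the submodular function $b(S)=\sum_{v\in S}d_G(v)-e_G(S)$ that counts the edges incident to $S$. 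In these terms the task is to produce a feasible $x$ with $x(v)\in\{p(v),q(v)\}$ for every $v$: the given $t$ already meets the sum constraint and is corner-valued, but it may fail feasibility, and repairing it into a feasible corner function is the real problem.

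First I would record that a feasible function lies inside the box. By Theorem~\ref{thm:Into:Frank-Gyrfas}, since $q(v)\ge\tfrac12 d_G(v)$ and $d_G(v)-p(v)\ge\tfrac12 d_G(v)$ yield $\sum_{v\in S}q(v)\ge e_G(S)$ and $\sum_{v\in S}(d_G(v)-p(v))\ge e_G(S)$ for all $S$, the graph admits a $(p,q)$-orientation $D_0$ (concretely, any balanced orientation with $d^+(v)\in\{\lfloor d_G(v)/2\rfloor,\lceil d_G(v)/2\rceil\}$ works). Its out-degree sequence $x_0$ is feasible and satisfies $p\le x_0\le q$.

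The core is a rounding argument. Among all $(p,q)$-orientations, pick one $D$ with out-degree function $x$ minimizing the potential $\Phi(x)=\sum_{v}\min\{x(v)-p(v),\,q(v)-x(v)\}$, the total distance of $x$ to the corner set; this is well defined since $x_0$ is admissible and $\Phi$ is a nonnegative integer. I claim $\Phi(x)=0$, which exhibits $x$ as a corner-valued feasible function and finishes the proof. The mechanism for lowering $\Phi$ is path reversal: reversing a directed path from $w$ to $u$ decreases $d^+(w)$ by one, increases $d^+(u)$ by one, leaves every other out-degree fixed, and is available exactly when $u$ is reachable from $w$ in $D$. If $\Phi(x)>0$ then some coordinate is strictly interior; using the identity $\sum_v x(v)=|E(G)|=\sum_v t(v)$ and the corner-valuedness of $t$, I would locate a vertex $u$ in the upper half of its interval and a vertex $w$ in the lower half (at least one strictly interior) such that transferring a unit of out-degree from $w$ to $u$ moves both toward their corners and so strictly decreases $\Phi$; when no single such pair admits a reversal directly, one first performs an auxiliary reversal that reassigns which corner a pair of already corner-valued vertices occupies, without increasing $\Phi$. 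Each of these contradicts the minimality of $\Phi(x)$.

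The main obstacle is precisely reachability, and it is here that $8k^2$-edge-connectivity and $|q-p|\le k$ are indispensable. A failure of reachability is certified by the set $R$ of vertices reachable from $w$ in $D$: it contains $w$, excludes $u$, and has all of its boundary edges oriented into $R$, so feasibility is tight on it, $\sum_{v\in R}x(v)=e_G(R)$. Using $x(v)\ge p(v)\ge q(v)-k\ge\tfrac12 d_G(v)-k$ one then obtains
\[
e_G(R)=\sum_{v\in R}x(v)\ \ge\ \tfrac12\sum_{v\in R}d_G(v)-k|R|\ =\ e_G(R)+\tfrac12\,e_G(R,V\setminus R)-k|R|,
\]
hence $e_G(R,V\setminus R)\le 2k|R|$; since edge-connectivity forces $e_G(R,V\setminus R)\ge 8k^2$, this yields $|R|\ge 4k$, and the symmetric estimate applied to $V\setminus R$ disposes of the case where $R$ is large. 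The plan is to combine this dichotomy with the submodularity of the family of such reachability-closed sets to show that the quadratic margin $8k^2$ against the linear penalty $k|R|$ always leaves an admissible partner $w$ outside every closed set containing $u$ (or, failing a single reversal, permits an intermediate reversal that enlarges the reachable region without increasing $\Phi$). Turning this counting into a proof that a $\Phi$-decreasing, feasibility-preserving modification exists whenever $\Phi>0$ is the crux of the argument; once it is in place, the $\Phi$-minimizer is corner-valued, hence a feasible $\{p,q\}$-valued function, and any orientation realizing it is the desired $\{p,q\}$-orientation.
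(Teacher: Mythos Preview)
The paper does not contain a proof of this theorem. It is quoted in the introduction as a result from the author's separate preprint~\cite{p-q}, purely as context for the present note; the body of the paper proves only Theorem~\ref{thm:simplegraph} and its corollaries. There is therefore nothing here to compare your attempt against.

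On your sketch itself: the converse direction and the $\Phi$-minimization setup are sound, and the path-reversal mechanism together with the reachability-cut analysis is a reasonable opening. But you have not actually carried out the step you yourself call ``the crux.'' From a blocking set $R$ you extract $e_G(R,\overline R)\le 2k|R|$ and, by the symmetric bound, $e_G(R,\overline R)\le 2k|\overline R|$; combined with $e_G(R,\overline R)\ge 8k^2$ this only gives $|R|,|\overline R|\ge 4k$, which is not a contradiction and does not by itself produce a $\Phi$-decreasing move. Your appeal to ``submodularity of reachability-closed sets'' and to a ``quadratic margin against a linear penalty'' is a promissory note, not an argument: you have not specified which pair of vertices you reverse between, why at least one such pair must be linked by a directed path, nor how the auxiliary ``corner-swap'' reversal is guaranteed to exist and to enlarge the reachable region. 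Until that combinatorial core is written out, this is an outline rather than a proof.
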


As an application, we refine the following result in bipartite simple graphs which is due to Addario-Berry, Dalal, and Reed (2008). More precisely, we conclude that under simpler conditions, a bipartite simple graph has a $\{g,g+1,f-1,f\}$-factor if and only if it has a $(g,f)$-factor.
\begin{thm}{\rm (\cite{AddarioBerry-Dalal-Reed-2008})}\label{intro:thm:2008}
{Let $G$ be a simple graph and let $g$ and $f$ be two integer-valued functions on $V(G)$ satisfying $f\le d_G$.
If for each $v\in V(G)$, $\frac{1}{2} f(v)-2\le g(v)\le \lfloor \frac{1}{2}d_G(v)\rfloor< f(v)\le \frac{1}{2}(d_G(v)+g(v))+2$,
then $G$ has a factor $F$ such that for each $v\in V(G)$, $$d_F(v)\in \{g(v),g(v)+1, f(v)-1,f(v)\}.$$
}\end{thm}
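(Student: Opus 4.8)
The plan is to treat this as a rounding problem and to separate the (clean) bipartite case, where the orientation machinery of this paper applies verbatim, from the general case, where a parity correction carries the real content.

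First, for the bipartite refinement announced above, I would use the standard edge-orientation bijection. Fixing a bipartition $V(G)=X\cup Y$, orient every edge and let $F$ be the set of edges directed from $X$ to $Y$; then $d_F(x)=d^+_G(x)$ for $x\in X$ and $d_F(y)=d_G(y)-d^+_G(y)$ for $y\in Y$. Hence a factor with $d_F(v)\in\{g(v),g(v)+1,f(v)-1,f(v)\}$ is exactly a $\{p,p+1,q-1,q\}$-orientation for the choice $p=g,\ q=f$ on $X$ and $p=d_G-f,\ q=d_G-g$ on $Y$. The window hypotheses translate precisely into $q(v)\ge\tfrac12 d_G(v)$ and $p(v)\ge\tfrac12 q(v)-2$ on both sides: on $X$ this is $f>\lfloor\tfrac12 d_G\rfloor$ and $g\ge\tfrac12 f-2$, and on $Y$ it is $g\le\lfloor\tfrac12 d_G\rfloor$ and $f\le\tfrac12(d_G+g)+2$. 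So the paper's orientation theorem reduces the bipartite factor existence to the existence of a $(p,q)$-orientation, and the latter is decided by Theorem~\ref{thm:Into:Frank-Gyrfas}; establishing feasibility of the corresponding $(g,f)$-factor then closes this case.

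For the general statement the obstruction is parity. I would assign to each vertex a mode — low, with target in $[g,g+1]$, or high, with target in $[f-1,f]$ — realize the chosen targets by a fractional subgraph (a transportation feasibility that the inequalities $g\le\lfloor\tfrac12 d_G\rfloor<f$, $f\le 2g+4$ and $2f\le d_G+g+4$ make solvable), and then round the fractional subgraph to an integral one by cancelling fractional values around cycles and along paths of its support. On bipartite supports all cycles are even, alternate pushing preserves all degrees, and the rounding lands every vertex in a two-element set $\{g,g+1\}$ or $\{f-1,f\}$ — which is exactly why the bipartite case needs no slack. The difficulty in general is the odd cycles of the support, which cannot be pushed without changing some degree by more than one.

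The hard part, which I expect to be the main obstacle, is correcting these odd cycles globally while keeping every vertex in its four-element window. My plan is to treat the odd components by a $T$-join or maximum-matching argument, perturbing the degree at a bounded set of vertices by a further $\pm1$; the four-element target set $\{g,g+1,f-1,f\}$ together with the additive constants $+2$ in the hypotheses is precisely the slack that absorbs the combined rounding-plus-parity error. I would verify this by a short case analysis split on whether a vertex ends in low or high mode, checking that $f\le 2g+4$ and $2f\le d_G+g+4$ keep the perturbed degree admissible even when a rounding error and a parity correction act at the same vertex. Carrying out the parity correction simultaneously at all odd components, rather than one at a time, is the step where I anticipate the real work.
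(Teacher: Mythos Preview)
This theorem is quoted from \cite{AddarioBerry-Dalal-Reed-2008} and is not proved in the present paper at all; the paper only establishes the bipartite refinement, Theorem~\ref{cor:g,g+1,f:bipartite}. So there is no ``paper's own proof'' of the full statement to compare against.

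Your bipartite reduction is exactly what the paper does for Theorem~\ref{cor:g,g+1,f:bipartite}: the dictionary $p=g,\ q=f$ on $X$ and $p=d_G-f,\ q=d_G-g$ on $Y$ is precisely the substitution in that proof, and the verification that the hypotheses of Theorem~\ref{thm:simplegraph} are met is the same computation you sketch. The existence of a $(p,q)$-orientation (equivalently a $(g,f)$-factor) then follows from the balanced orientation noted just before Theorem~\ref{cor:g,g+1,f:bipartite}, since $g\le\lfloor d_G/2\rfloor$ and $f\ge\lceil d_G/2\rceil$. So on the bipartite side your plan and the paper coincide.

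For the non-bipartite statement your proposal is a plan, not a proof, and you say so yourself. The fractional rounding picture is reasonable intuition, but the step you flag as ``the real work'' --- absorbing the parity corrections from odd cycles while staying inside $\{g,g+1,f-1,f\}$ --- is the entire substance of the original argument in \cite{AddarioBerry-Dalal-Reed-2008}. A rounding error of $\pm 1$ together with a $T$-join perturbation of $\pm 1$ already exhausts the slack between $g+1$ and $f-1$ when $f=g+3$, and nothing in your outline prevents both errors from landing at the same vertex with the same sign. The original proof does not proceed by post-hoc parity repair of a fractional solution; it builds the factor via a Lov\'asz-type degree-constrained subgraph criterion with a carefully chosen target assignment. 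Your sketch does not supply that mechanism, so as it stands it does not prove the general case.
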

\section{$\{\p,\p+1,q-1, \q\}$-orientations of simple graphs}
\label{sec:p,p+1,q-1,q}
The following theorem gives a criterion for existence of $\{\p,\p+1, \q-1,\q\}$-orientations in simple graphs.
\begin{thm}\label{thm:simplegraph}
{Let $G$ be a simple graph and let $\p$ and $\q$ be two integer-valued functions on $V(G)$ with $p<q$ in which
 for each $v\in V(G)$, $q(v) \ge \frac{1}{2}d_G(v)$ and $\p(v) \ge \frac{1}{2} \q(v)-2$. 
Then $G$ admits a $(p,q)$-orientation if and only if it has an orientation such that for each $v\in V(G)$, 
$$d^+_G(v)\in\{\p(v),p(v)+1,q(v)-1,\q(v)\}.$$
}\end{thm}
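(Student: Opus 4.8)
Since $p<q$ we have $\{p(v),p(v)+1,q(v)-1,q(v)\}\subseteq\{p(v),\dots,q(v)\}$, so the ``only if'' direction is immediate; the whole point is to build a $\{p,p+1,q-1,q\}$-orientation from a given $(p,q)$-orientation. I would put $m(v)=q(v)-p(v)$ and $r(v)=d_G(v)-q(v)$, and for an orientation $D$ write $\alpha_D(v)=d^+_D(v)-p(v)$ and $\beta_D(v)=q(v)-d^+_D(v)=d^-_D(v)-r(v)$. For a $(p,q)$-orientation these are nonnegative with $\alpha_D(v)+\beta_D(v)=m(v)$, and $d^+_D(v)\in\{p(v),p(v)+1,q(v)-1,q(v)\}$ holds exactly when $\min(\alpha_D(v),\beta_D(v))\le 1$; call $v$ \emph{bad} (for $D$) in the remaining case. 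A bad vertex has $m(v)\ge 4$, and what we want is a $(p,q)$-orientation with no bad vertex. The plan is to fix a $(p,q)$-orientation $D$ minimizing the number of bad vertices (refining, if necessary, by a secondary potential), assume it has a bad vertex, and derive a contradiction.

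The only moves used are reversals of directed paths and of directed cycles of $D$: reversing a directed path from $u$ to $w$ lowers $d^+(u)$ and raises $d^+(w)$ by one while leaving all other out-degrees fixed, and a directed-cycle reversal changes no out-degree; either move again gives a $(p,q)$-orientation as long as $d^+_D(u)>p(u)$ and $d^+_D(w)<q(w)$. Let $v_0$ be bad. After a case split according to whether $\alpha_D(v_0)\le\beta_D(v_0)$ or $\alpha_D(v_0)\ge\beta_D(v_0)$ --- the two cases are parallel, but in the in-degree picture the hypotheses must be re-read through $r=d_G-q$ (they are not literally reversal-invariant), so the mirror case needs a separate short verification --- assume $\alpha_D(v_0)\le\beta_D(v_0)$, so $2\le\alpha_D(v_0)\le m(v_0)/2$ and lowering $d^+_D(v_0)$ by one decreases $\min(\alpha_D(v_0),\beta_D(v_0))$. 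Let $R$ be the set of vertices reachable from $v_0$ along directed paths of $D$. Since $R$ is reachability-closed, no arc of $D$ enters $R$; therefore $\sum_{w\in R}d^-_D(w)=e_G(R)$, each $w\in R$ has $d^+_D(w)\le|R|-1$ and $d^-_D(w)\le|R|-1$, and $\sum_{w\in R}d^+_D(w)=\sum_{w\in R}d_G(w)-e_G(R)\ge\tfrac12\sum_{w\in R}d_G(w)$.

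One checks that if some $w\in R\setminus\{v_0\}$ with $d^+_D(w)<q(w)$ has $d^+_D(w)=p(w)$ or $d^+_D(w)\ge(p(w)+q(w))/2$ (a ``safe sink''), then a suitable short sequence of directed-path reversals starting at $v_0$ --- routing toward safe sinks and continuing until $d^+(v_0)$ reaches $p(v_0)+1$ --- yields a $(p,q)$-orientation with strictly fewer bad vertices, contradicting the choice of $D$. Hence every $w\in R\setminus\{v_0\}$ with $d^+_D(w)<q(w)$ satisfies $p(w)+1\le d^+_D(w)\le(p(w)+q(w))/2-1$, and $d^+_D(v_0)\le(p(v_0)+q(v_0))/2$; in particular $d^+_D(w)\le(p(w)+q(w))/2$ for every $w\in R$ with $d^+_D(w)<q(w)$.

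It then remains to turn these bounds into a contradiction by a count on $R$: with $T=\{w\in R:d^+_D(w)=q(w)\}$, one combines $\sum_{w\in R}d^+_D(w)\ge\tfrac12\sum_{w\in R}d_G(w)$, the bound $d^+_D(w)\le(p(w)+q(w))/2$ on $R\setminus T$, the hypotheses $d_G(w)\le 2q(w)$ and $q(w)\le 2p(w)+4$, and the simple-graph bounds $e_G(R)\le\binom{|R|}{2}$ and $d^+_D(w),d^-_D(w)\le|R|-1$. I expect this count, together with making the reduction to it airtight, to be the main obstacle. Estimates that use only $e_G(R)\le\tfrac12\sum_{w\in R}d_G(w)$ do not suffice --- on a suitable $10$-regular simple graph with $p\equiv 2$, $q\equiv 8$ the all-out-degree-$5$ orientation has every vertex bad and is a local minimum, under single path reversals, of the number of bad vertices, although a $\{2,3,7,8\}$-orientation exists --- so one must genuinely exploit the per-vertex bound $d_G(w)\le|R|-1$ that holds because no arc leaves $R$ in the extremal configuration, use both degree hypotheses, and most likely also choose the extremal orientation more carefully (or pass through several intermediate, possibly non-improving, orientations) so that $R$ is rigid enough for the estimate to close.
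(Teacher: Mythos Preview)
Your setup is correct through the reachable set $R$ and the observation that every $v\in R$ has $d^+_D(v)\notin\{p(v),q(v)-1\}$, but the proposal is genuinely incomplete: the closing count does not go through, and your own $10$-regular example already shows why. With $p\equiv 2$, $q\equiv 8$ and all out-degrees $5$, any single directed-path reversal leaves both the number of bad vertices \emph{and} the finer potential $\sum_{v\in X(D)}(d^+_D(v)-p(v))$ unchanged. The obstruction is not the choice of potential; it is that directed-path reversals, which move one unit of out-degree at a time, are too weak a move, and trying to push $v_0$ \emph{down} toward $p(v_0)$ runs against the asymmetry of the hypotheses.

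The paper's idea is instead to push a single vertex of $R$ \emph{up} to out-degree $q-1$ in one step by reversing many of its in-arcs inside $R$ simultaneously. Since no arc leaves $R$, $\sum_{v\in R}d^-_{D[R]}(v)=\sum_{v\in R}d^+_D(v)$, so either $d^-_{D[R]}(v_0)\ge d^+_D(v_0)$ or some $y\in R\setminus\{v_0\}$ has $d^-_{D[R]}(y)\ge d^+_D(y)+1$. In the second case (the first is analogous with $y=v_0$), reverse one directed $v_0\to y$ path and then $q(y)-2-d^+_D(y)$ further in-arcs of $y$ in $D[R]$. The hypothesis $p(y)\ge q(y)/2-2$ together with $d^+_D(y)\ge p(y)+1$ gives $d^-_{D[R]}(y)\ge d^+_D(y)+1\ge q(y)-1-d^+_D(y)$, so enough in-arcs exist; the hypothesis $q(y)\ge d_G(y)/2$ forces $d^+_D(y)\le q(y)-2$, so the move is nontrivial. \emph{This} is where simplicity is actually used: the reversed in-arcs at $y$ come from pairwise distinct neighbours, each of which loses at most one from its out-degree; since every such neighbour $z\in R$ had $d^+_D(z)\ne p(z)$, the result is still a $(p,q)$-orientation, no new bad vertex appears, $y$ leaves the bad set, and $v_0$'s out-degree has dropped by $1$ or $2$. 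With the potential $\sum_{v\in X(D)}(d^+_D(v)-p(v))$ this gives a strict decrease, and no case split on $\alpha\le\beta$ versus $\alpha\ge\beta$ is needed.
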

\begin{proof}
{Consider an orientation for $G$ such that for each vertex $v$, 
$\p(v) \le d^+_D(v)\le \q(v)$.
Now, among such orientations, consider $D$ with the minimum $\sum_{v\in W_D}|d^+_D(v)-\p(v)|$,
where $X(D)=\{v\in V(G):\p(v)+1<d^+_D(v)< \q(v)-1\}$.
If $X(D)=\emptyset$, then the proof is completed.
Suppose, to the contrary, that there is a vertex $x\in X(D)$.
Define $S$ to be the set of all vertices $v$ such that there is a directed path from $x$ to $v$. 
Note that we must have $d^+_D(v)\not \in\{ p(v),q(v)-1\}$;
otherwise, we can reverse the orientation of that path to obtain a better orientation, which derives a contradiction.
Obviously, $x\in S$.
By the definition of $S$, there is no directed edge from $S$ to $V(G)\setminus S$.
Therefore,
$$\sum_{v\in S}d^-_{D[S]}(v)= \sum_{v\in S}d^+_D(v).$$
This implies that $d^-_{D[S]}(x)\ge d^+_D(x)$ or
 there is a vertex $y\in S\setminus \{x\}$ such that $d^-_{D[S]}(y)\ge d^+_D(y)+1$. 
In the first case, 
since $d^+_D(x)\ge p(x)+2\ge \frac{1}{2}q(x)$, 
we must have 
$d^-_{D[S]}(x)\ge d^+_D(x)\ge q(x)-1-d^+_D(x)\ge 1$.
Thus
we can reverse the orientation of
$\q(x)-1-d^+_{D}(x)$ edges of $D[S]$ incident to $x$ which is directed toward it. 
In the second case, 
since $d^+_D(y)\ge p(y)+1\ge \frac{1}{2}q(y)-1$, 
similarly we must have 
$d^-_{D[S]}(y)\ge d^+_D(y)+1 \ge q(y)-1-d^+_D(y)$.
In addition, the inequality $q(y)\ge d_G(y)/2$ implies that $d^+_D(y)\neq q(y)$ and hence $q(y) -1- d^+_D(y) \ge 1 $.
Therefore, we can first reverse the orientation of a directed path from $x$ to $y$, 
and next reverse the orientation of 
$\q(y)-2-d^+_{D}(y)$ 
edges incident to $y$ which is directed toward it. 

Let $D_0$ be the new orientation of $G$ and let $X(D_0)=\{v\in V(G):\p(v)+1<d^+_{D_0}(v)< \q(v)-1\}$.
Since $G$ has no multiple edges, each $v\in S\setminus \{x, y\}$ is incident to at most one modified edge of the last step. 
This implies that $d^+_{D_0}(v)-d^+_D(v)\in \{-1,0\}$ and $p(v)\le d^+_{D_0}(v)\le q(v)$. 
Recall that $d^+_D(v)\not \in\{ p(v),q(v)-1\}$ for all $v\in S$.
For the first case, we have $d^+_{D_0}(x)=q(x)-1$ and hence $X(D_0)\subseteq X(D)\setminus \{x\}$.
For the second case, we have 
 $d^+_{D_0}(x)-d^+_D(x)\in \{-2,-1\}$ and $d^+_{D_0}(y)=q(y)-1$, and hence $X(D_0)\subseteq X(D)\setminus \{y\}$.
Therefore, $D_0$ is a $(p,q)$-orientation of $G$ while 
$\sum_{v\in X(D_0)}|d^+_{D_0}(v)-\p(v)|<\sum_{v\in X(D)}|d^+_D(v)-\p(v)|$.
This is a contradiction and consequently the theorem is proved
}\end{proof}
\begin{cor}\label{cor:simplegraph}
{Let $G$ be a simple graph and let $\p$ and $\q$ be two integer-valued functions on $V(G)$ with $p<q$.
 If for each $v\in V(G)$,
$\frac{1}{3}d_G(v)-\frac{4}{3}\le p(v)\le \frac{1}{2}d_G(v) \le q(v)\le\frac{2}{3}d_G(v)+\frac{4}{3}$,
then $G$ admits an orientation such that for each $v\in V(G)$, 
$$d^+_G(v)\in\{\p(v),p(v)+1,q(v)-1,\q(v)\}.$$
}\end{cor}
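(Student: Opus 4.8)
The plan is to deduce Corollary~\ref{cor:simplegraph} from Theorem~\ref{thm:simplegraph} together with the Frank--Gy\'arf\'as criterion (Theorem~\ref{thm:Into:Frank-Gyrfas}). By Theorem~\ref{thm:simplegraph}, it suffices to verify two things: first, that the hypotheses on $p$ and $q$ in the corollary imply the hypotheses of Theorem~\ref{thm:simplegraph}, namely $q(v)\ge \tfrac12 d_G(v)$ and $p(v)\ge \tfrac12 q(v)-2$ for every $v$; and second, that $G$ actually admits a $(p,q)$-orientation. The first point is essentially a bookkeeping calculation: the inequality $q(v)\ge \tfrac12 d_G(v)$ is given directly, and $p(v)\ge \tfrac12 q(v)-2$ follows from $p(v)\ge \tfrac13 d_G(v)-\tfrac43$ and $q(v)\le \tfrac23 d_G(v)+\tfrac43$ by substituting the upper bound for $q(v)$ into $\tfrac12 q(v)-2$ and comparing; I would also note $p<q$ is assumed, so the only real content is the existence of a $(p,q)$-orientation.

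For the existence of a $(p,q)$-orientation I would invoke Theorem~\ref{thm:Into:Frank-Gyrfas} and check that for every $S\subseteq V(G)$ we have $e_G(S)\le \sum_{v\in S} q(v)$ and $e_G(S)\le \sum_{v\in S}(d_G(v)-p(v))$. Since $G$ is simple, $e_G(S)=\tfrac12\sum_{v\in S} d_{G[S]}(v)\le \tfrac12\sum_{v\in S} d_G(v)$. For the first bound, $q(v)\ge \tfrac12 d_G(v)$ gives $\sum_{v\in S}q(v)\ge \tfrac12\sum_{v\in S} d_G(v)\ge e_G(S)$ immediately. For the second bound I need $\sum_{v\in S}(d_G(v)-p(v))\ge \tfrac12\sum_{v\in S} d_G(v)$, i.e.\ $\sum_{v\in S} p(v)\le \tfrac12\sum_{v\in S} d_G(v)$, which follows termwise from the hypothesis $p(v)\le \tfrac12 d_G(v)$. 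Thus both Frank--Gy\'arf\'as inequalities hold for all $S$, so $G$ has a $(p,q)$-orientation.

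Having established both ingredients, Theorem~\ref{thm:simplegraph} applies verbatim and yields an orientation with $d^+_G(v)\in\{p(v),p(v)+1,q(v)-1,q(v)\}$ for every $v$, which is exactly the conclusion of the corollary.

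I do not anticipate a genuine obstacle here; the corollary is a clean specialization. The only mild subtlety is making sure the affine inequalities in the hypothesis are tight enough to force $p(v)\ge \tfrac12 q(v)-2$ — one should double-check the edge case where $q(v)$ attains its upper bound $\tfrac23 d_G(v)+\tfrac43$ and $p(v)$ attains its lower bound $\tfrac13 d_G(v)-\tfrac43$, where the inequality $p(v)\ge \tfrac12 q(v)-2$ becomes $\tfrac13 d_G(v)-\tfrac43\ge \tfrac13 d_G(v)+\tfrac23-2=\tfrac13 d_G(v)-\tfrac43$, an equality, so the bound is exactly sharp and the corollary's constants are chosen precisely to make this work.
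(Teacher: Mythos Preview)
Your proof is correct and follows the same overall plan as the paper: verify that the corollary's hypotheses imply those of Theorem~\ref{thm:simplegraph}, produce a $(p,q)$-orientation, and then apply Theorem~\ref{thm:simplegraph}. The one difference is in how the $(p,q)$-orientation is obtained. You appeal to the Frank--Gy\'arf\'as criterion and check both inequalities via $e_G(S)\le \tfrac12\sum_{v\in S} d_G(v)$; the paper instead simply takes a balanced orientation of $G$ (one with $|d^+_G(v)-d^-_G(v)|\le 1$ for every $v$) and observes that then $p(v)\le \lfloor d_G(v)/2\rfloor\le d^+_G(v)\le \lceil d_G(v)/2\rceil\le q(v)$. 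The paper's route is slightly more direct and avoids invoking Theorem~\ref{thm:Into:Frank-Gyrfas}, but your argument is equally valid and the verification of $p(v)\ge \tfrac12 q(v)-2$ (including the sharpness check) matches the paper's exactly.
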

\begin{proof}
{Obviously, the graph $G$ has an orientation such that for each vertex $v$, $|d^+_G(v)-d^-_G(v)|\le 1$ which implies that
$q(v)\le \lfloor \frac{1}{2}d_G(v)\rfloor \le d^+_G(v)\le \lceil \frac{1}{2}d_G(v)\rceil\le p(v)$. 
Since $p(v)\ge \frac{1}{3}d_G(v)-\frac{4}{3}\ge \frac{1}{2}q(v)-2$, the proof can be completed by Theorem~\ref{thm:simplegraph}.
}\end{proof}
\begin{remark}
{Theorem~\ref{thm:simplegraph} can be reformulated by replacing the conditions 
 $q(v) \le (d_G(v)+\q(v))/2+2$ and $p(v) \le d_G(v)/2$. To see this, it is enough to work with restricted in-degrees in the proof.
By the same arguments in the proof, one can also develop Theorem~\ref{thm:simplegraph} to multigraphs $G$ provided that for each vertex $v$, $p(v)-q(v)/2+2\ge d_G(v)-|N_G(v)|$, where $N_G(v)$ denotes the set of all neighbours of $v$ in $G$. 
}\end{remark}
\section{Applications to degree constrained factors}
Addario-Berry, Dalal, McDiarmid, Reed, and Thomason (2007)
 established the following theorem on the existence of degree constrained factors in simple graphs. 
This result was a prototype of Theorem~\ref{intro:thm:2008}.
In this section, we are going to introduce a new stronger version for both of Theorems~\ref{intro:thm:2008} and~\ref{thm:Intro:2007} in bipartite simple graphs based Theorem~\ref{thm:simplegraph}.

\begin{thm}{\rm (\cite{AddarioBerry-Dalal-McDiarmid-Reed-Thomason-2007})}\label{thm:Intro:2007}
{Let $G$ be a simple graph and let $g$ and $f$ be two integer-valued functions on $V(G)$.
If for each $v\in V(G)$, $\frac{1}{3}d_G(v)-\frac{4}{3}\le g(v)\le \frac{1}{2}d_G(v) \le f(v)\le\frac{2}{3}d_G(v)+\frac{4}{3}$,
then $G$ has a factor $F$ such that for each $v\in V(G)$, $$d_F(v)\in \{g(v),g(v)+1, f(v)-1,f(v)\}.$$
}\end{thm}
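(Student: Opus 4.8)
The plan is to deduce Theorem~\ref{thm:Intro:2007} directly from Theorem~\ref{intro:thm:2008} by showing that the hypotheses of the former are, up to one boundary case, a special instance of the hypotheses of the latter. Fix a vertex and abbreviate $d=d_G(v)$, $g=g(v)$, $f=f(v)$, assuming $\frac{1}{3}d-\frac{4}{3}\le g\le \frac{1}{2}d\le f\le \frac{2}{3}d+\frac{4}{3}$. I would check the four requirements of Theorem~\ref{intro:thm:2008} at every vertex. The bound $f\le d_G$ follows from $f\le\frac{2}{3}d+\frac{4}{3}$ together with integrality (the cases $d\le 3$ being checked by hand). The inequality $\frac{1}{2}f-2\le g$ holds because $f\le\frac{2}{3}d+\frac{4}{3}$ gives $\frac{1}{2}f-2\le \frac{1}{3}d-\frac{4}{3}\le g$. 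The inequality $g\le\lfloor\frac{1}{2}d\rfloor$ is immediate from $g\le\frac{1}{2}d$ and integrality of $g$. Finally $f\le\frac{1}{2}(d+g)+2$ holds since $g\ge\frac{1}{3}d-\frac{4}{3}$ yields $\frac{1}{2}(d+g)+2\ge\frac{2}{3}d+\frac{4}{3}\ge f$. Thus every requirement of Theorem~\ref{intro:thm:2008} is met, except possibly the strict inequality $\lfloor\frac{1}{2}d\rfloor<f$.

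The strict inequality can fail only when $d$ is even and $f=\frac{1}{2}d$ (if $d$ is odd, then $f\ge\frac{1}{2}d$ already forces $f\ge\lfloor\frac{1}{2}d\rfloor+1$). So the whole difficulty is concentrated at the \emph{upper-boundary} vertices $U=\{v:\,d_G(v)\text{ even},\ f(v)=\frac{1}{2}d_G(v)\}$ and, symmetrically, at the \emph{lower-boundary} vertices $L=\{v:\,d_G(v)\text{ even},\ g(v)=\frac{1}{2}d_G(v)\}$. To exploit symmetry I would use that the hypotheses of Theorem~\ref{thm:Intro:2007} are invariant under the complementation $(g,f)\mapsto(d_G-f,\,d_G-g)$, and that a factor $F$ lands in the band $\{g,g+1,f-1,f\}$ if and only if its complement $E(G)\setminus F$ lands in the complementary band $\{d_G-f,\,d_G-f+1,\,d_G-g-1,\,d_G-g\}$. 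Passing to the complement turns every upper-boundary vertex into a strict vertex, since there $d_G-g>\frac{1}{2}d_G=\lfloor\frac{1}{2}d_G\rfloor$, while keeping the lower bound admissible. Hence if $L=\emptyset$ one applies Theorem~\ref{intro:thm:2008} to the complementary pair, and if $U=\emptyset$ one applies it directly; in either case the desired factor is produced.

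The main obstacle is the genuinely two-sided case in which $U$ and $L$ are both nonempty: then neither the original pair nor its complement satisfies the strict inequality of Theorem~\ref{intro:thm:2008} at every vertex simultaneously, so a single invocation does not suffice. I expect this to be the crux, and I would resolve it by bypassing Theorem~\ref{intro:thm:2008} at the boundary and instead running the extremal argument of Theorem~\ref{thm:simplegraph} in the factor setting. Concretely, I would start from a $(g,f)$-factor (one exists because $g\le\lfloor\frac{1}{2}d_G\rfloor\le\lceil\frac{1}{2}d_G\rceil\le f$ guarantees a near-balanced subgraph), choose among all $(g,f)$-factors one minimizing $\sum_{v\in X(F)}|d_F(v)-g(v)|$ over the middle set $X(F)=\{v:g(v)+1<d_F(v)<f(v)-1\}$, and push the middle vertices to the boundary by swapping along alternating paths, which play the role that reversing directed paths plays in Theorem~\ref{thm:simplegraph}. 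Here the key inequality $g\ge\frac{1}{2}f-2$, which the present hypotheses imply and which is the factor form of the condition $p\ge\frac{1}{2}q-2$ in Theorem~\ref{thm:simplegraph}, supplies the slack $d_F(x)\ge g(x)+2\ge\frac{1}{2}f(x)$ needed to guarantee enough swappable edges, while simplicity of $G$ ensures that each vertex meets at most one edge altered in the final correction. The delicate point in this route is the factor analogue of the counting identity $\sum_{v\in S}d^+_D(v)=\sum_{v\in S}d^-_{D[S]}(v)$: one must track how $F$-edges and non-$F$-edges cross the boundary of the alternating-reachable set $S$, whose two-level even/odd structure replaces the single-level directed reachability, and then conclude that a surplus middle vertex admits a correction strictly lowering the potential. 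Establishing that identity, together with the pigeonhole step producing the correctable vertex, is where I expect the real work to lie.
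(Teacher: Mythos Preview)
The paper does not contain a proof of Theorem~\ref{thm:Intro:2007}; it is quoted from~\cite{AddarioBerry-Dalal-McDiarmid-Reed-Thomason-2007} purely as background, and the paper's own contribution in that direction is only the \emph{bipartite} refinement, Theorem~\ref{cor:g,g+1,f:bipartite}, obtained from Theorem~\ref{thm:simplegraph} through the orientation--factor dictionary of Lemma~\ref{lem:factor-orientation}. So there is no in-paper argument to compare your proposal against.

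Regarding the proposal on its own merits: your reduction to Theorem~\ref{intro:thm:2008} is correct away from the boundary, and your identification of the two-sided obstruction (both $U$ and $L$ nonempty) is accurate. The gap lies in the final paragraph. Transplanting the proof of Theorem~\ref{thm:simplegraph} to factors of a \emph{general} simple graph is not a routine translation: in the orientation argument, reversing an edge moves the out-degrees of its two ends in \emph{opposite} directions, which is precisely why the bulk correction at $y$ (reversing several incoming edges) is harmless for the neighbours in $S$. Adding or deleting a factor edge, by contrast, moves both endpoint degrees in the \emph{same} direction, so the step ``reverse $q(y)-1-d^+_D(y)$ edges toward $y$'' has no direct factor analogue that leaves the neighbours inside their $(g,f)$-range. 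Moreover, for non-bipartite $G$ the alternating-reachable set carries Tutte--Lov\'asz parity data with no counterpart in the directed set $S$, and the counting identity you single out does not survive the translation. Your outline would essentially recover the bipartite case (that is, Theorem~\ref{cor:g,g+1,f:bipartite}, where Lemma~\ref{lem:factor-orientation} makes factors and orientations equivalent), but as written it does not yield Theorem~\ref{thm:Intro:2007} for arbitrary simple graphs without a genuinely new ingredient.
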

For this purpose, we need the following lemma that provides a useful relation between orientation and factors of bipartite graphs.
A special case of this lemma was also used by Thomassen (2014)~\cite{Thomassen-2014} to form a result on modulo factors of edge-connected graphs.
\begin{lem}\label{lem:factor-orientation}
{Let $G$ be a bipartite graph with bipartition $(X,Y)$ and $L:V(G)\rightarrow 2^\mathbb{Z}$ be a function.
Then $G$ admits an $L$-orientation if and only if $G$ admits an $L_0$-factor, where for each vertex $v$,
$$L_0(v)=
 \begin{cases}
L(v),	&\text{when $v\in X$};\\
\{d_G(v)-i:i\in L(v)\},	&\text{when $v\in Y$}. 
\end {cases}$$
}\end{lem}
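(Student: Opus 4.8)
The plan is to establish the equivalence by converting an orientation of $G$ into a factor and vice versa via a natural bijection between directed edges and chosen edges. The key observation is that in a bipartite graph $G$ with bipartition $(X,Y)$, any orientation $D$ partitions $E(G)$ into those edges directed from $X$ to $Y$ and those directed from $Y$ to $X$. First I would, given an $L$-orientation $D$ of $G$, define a subgraph $F$ of $G$ by selecting exactly those edges of $G$ that are directed from $X$ to $Y$ in $D$. Then for a vertex $v\in X$, the degree $d_F(v)$ equals the number of out-edges of $v$, namely $d^+_D(v)\in L(v)=L_0(v)$; for a vertex $v\in Y$, the edges of $F$ incident to $v$ are precisely those directed \emph{into} $v$, so $d_F(v)=d^-_D(v)=d_G(v)-d^+_D(v)$, which lies in $\{d_G(v)-i:i\in L(v)\}=L_0(v)$. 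Hence $F$ is an $L_0$-factor.

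Conversely, given an $L_0$-factor $F$ of $G$, I would define an orientation $D$ of $G$ by orienting every edge of $F$ from its $X$-endpoint to its $Y$-endpoint, and every edge of $E(G)\setminus E(F)$ from its $Y$-endpoint to its $X$-endpoint. Then for $v\in X$ the out-edges of $v$ are exactly the edges of $F$ at $v$, so $d^+_D(v)=d_F(v)\in L_0(v)=L(v)$; for $v\in Y$ the out-edges of $v$ are exactly the non-$F$ edges at $v$, so $d^+_D(v)=d_G(v)-d_F(v)$, and since $d_F(v)\in L_0(v)=\{d_G(v)-i:i\in L(v)\}$ we get $d_G(v)-d_F(v)\in L(v)$. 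Thus $D$ is an $L$-orientation, completing the equivalence.

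There is essentially no obstacle here: both constructions are inverse to each other under the correspondence ``edge directed from $X$ to $Y$'' $\leftrightarrow$ ``edge belongs to $F$'', and the bipartiteness guarantees that every edge has a well-defined $X$-endpoint and $Y$-endpoint, so the orientation/selection is unambiguous. The only point requiring a moment's care is bookkeeping the in-degree/out-degree relation $d^-_D(v)=d_G(v)-d^+_D(v)$ on the $Y$-side and checking that the set transformation $i\mapsto d_G(v)-i$ is exactly what converts between $L$ and $L_0$ there; this is immediate from the definition of $L_0$. I would present the argument in the ``forward'' and ``backward'' directions as above, each in two or three lines.
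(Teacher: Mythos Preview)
Your proposal is correct and follows essentially the same approach as the paper's proof: both directions use the correspondence between edges directed from $X$ to $Y$ and edges of the factor $F$, and then verify the degree identities $d_F(v)=d^+_D(v)$ for $v\in X$ and $d_F(v)=d_G(v)-d^+_D(v)$ for $v\in Y$. The paper's version is simply terser, leaving the membership checks in $L(v)$ and $L_0(v)$ implicit.
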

\begin{proof}
{If $D$ is an orientation of $G$, then the factor $F$ consisting of all edges of $G$ directed from $X$ to $Y$
satisfies $d_F(v)=d^+_D(v)$ for each $v\in X$, and $d_F(v)=d_G(v)-d^+_D(v)$ for each $v\in Y$.
Conversely, from every factor $F$, we can make an orientation $D$ whose edges directed from $X$ to $Y$ 
are exactly the same edges of $F$.
}\end{proof}
The following theorem is an equivalent version of Theorem~\ref{thm:simplegraph} in bipartite graphs in terms of factors.
Note that every bipartite graph $G$ has a factor $F$ such that for each vertex $v$, $\lfloor d_G(v)/2\rfloor \le d_F(v)\le \lceil d_G(v)/2\rceil$. To see this, it is enough to apply Lemma~\ref{lem:factor-orientation} along with an orientation of $G$ such that for each vertex $v$, $|d^+_G(v)-d^-_G(v)|\le 1$.
\begin{thm}\label{cor:g,g+1,f:bipartite}
{Let $G$ be a bipartite simple graph with bipartition $(X,Y)$ and let $g$ and $f$ be two integer-valued functions on $V(G)$ with $g<f$.
Assume that for each $v\in X$, $f(v)\ge d_G(v)/2$ and $g(v)\ge f(v)/2-2$,
and for each $v\in Y$, 
 $g(v)\le d_G(v)/2$ and $f(v)\le (d_G(v)+g(v))/2+2$.
Then $G$ has a $(g,f)$-factor if and only if it has a factor $F$ such that for each $v\in V(G)$,
$$d_F(v)\in \{g(v),g(v)+1, f(v)-1, f(v)\}.$$
}\end{thm}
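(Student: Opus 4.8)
The plan is to reduce the statement to Theorem~\ref{thm:simplegraph} by means of the orientation/factor dictionary of Lemma~\ref{lem:factor-orientation}. First I would introduce auxiliary integer-valued functions $p$ and $q$ on $V(G)$ by setting $p(v)=g(v)$, $q(v)=f(v)$ for $v\in X$, and $p(v)=d_G(v)-f(v)$, $q(v)=d_G(v)-g(v)$ for $v\in Y$. Under this choice the relevant target sets translate cleanly. For $v\in X$ we have $\{p(v),p(v)+1,q(v)-1,q(v)\}=\{g(v),g(v)+1,f(v)-1,f(v)\}$ and $\{p(v),\dots,q(v)\}=\{g(v),\dots,f(v)\}$. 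For $v\in Y$ one computes $d_G(v)-p(v)=f(v)$, $d_G(v)-(p(v)+1)=f(v)-1$, $d_G(v)-(q(v)-1)=g(v)+1$, $d_G(v)-q(v)=g(v)$, so $\{d_G(v)-i:i\in\{p(v),p(v)+1,q(v)-1,q(v)\}\}=\{g(v),g(v)+1,f(v)-1,f(v)\}$, and likewise the reflected interval $\{d_G(v)-i:p(v)\le i\le q(v)\}$ equals $\{g(v),\dots,f(v)\}$.

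Next I would verify that $p$ and $q$ meet the hypotheses of Theorem~\ref{thm:simplegraph}. The strict inequality $p<q$ holds on $X$ because $g<f$ and on $Y$ because $d_G-f<d_G-g$. The condition $q(v)\ge \tfrac12 d_G(v)$ reads $f(v)\ge \tfrac12 d_G(v)$ on $X$ (assumed) and $d_G(v)-g(v)\ge\tfrac12 d_G(v)$, i.e. $g(v)\le\tfrac12 d_G(v)$, on $Y$ (assumed). The condition $p(v)\ge\tfrac12 q(v)-2$ reads $g(v)\ge\tfrac12 f(v)-2$ on $X$ (assumed) and, after rearranging $d_G(v)-f(v)\ge\tfrac12(d_G(v)-g(v))-2$, it reads $f(v)\le\tfrac12(d_G(v)+g(v))+2$ on $Y$ (assumed). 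Hence Theorem~\ref{thm:simplegraph} applies to $G$ with these $p,q$ and asserts that $G$ has a $(p,q)$-orientation if and only if $G$ has a $\{p,p+1,q-1,q\}$-orientation.

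Finally I would transport both sides of this equivalence back to factors using Lemma~\ref{lem:factor-orientation} twice. Applying it with $L(v)=\{p(v),\dots,q(v)\}$ (for which an $L$-orientation is exactly a $(p,q)$-orientation) identifies $(p,q)$-orientations of $G$ with $L_0$-factors; by the computation above $L_0(v)=\{g(v),\dots,f(v)\}$ for every vertex, so these are precisely the $(g,f)$-factors. Applying it with $L(v)=\{p(v),p(v)+1,q(v)-1,q(v)\}$ identifies $\{p,p+1,q-1,q\}$-orientations with $L_0$-factors, and here $L_0(v)=\{g(v),g(v)+1,f(v)-1,f(v)\}$ for every vertex. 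Chaining the three equivalences gives that $G$ has a $(g,f)$-factor if and only if it has a factor $F$ with $d_F(v)\in\{g(v),g(v)+1,f(v)-1,f(v)\}$ for all $v$, which is the claim.

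I do not expect a genuine obstacle; the only point demanding care is that the hypotheses imposed on vertices of $Y$ are the reflections of those on $X$, so one must keep track of the direction of each inequality when substituting $p=d_G-f$ and $q=d_G-g$ and matching them against the hypotheses of Theorem~\ref{thm:simplegraph}.
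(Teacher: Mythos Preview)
Your proposal is correct and follows exactly the paper's approach: apply Lemma~\ref{lem:factor-orientation} together with Theorem~\ref{thm:simplegraph} using $p=g$, $q=f$ on $X$ and $p=d_G-f$, $q=d_G-g$ on $Y$. Your write-up simply spells out the hypothesis checks and the set translations that the paper leaves implicit.
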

\begin{proof}
{Apply Lemma~\ref{lem:factor-orientation} and Theorem~\ref{thm:simplegraph} 
by setting $p(v)=g(v)$ and $q(v)=f(v)$ for each $v\in X$, and setting $p(v)=d_G(v)-f(v)$ and $q(v)=d_G(v)-g(v)$ for each $v\in Y$.
}\end{proof}
\begin{remark}
{Note that one can use Lemma~\ref{lem:factor-orientation} to rediscover Theorem 4 and Lemma 6 in \cite{ADEOS},
which gave sufficient conditions for the existence of list orientations, from Theorem 1 in~\cite{Frank-Lau-Szabo-2008} and Theorem 2 in \cite{Frank-Lau-Szabo-2008}, which gave sufficient conditions for the existence of list factors.
}\end{remark}
%
%
%
%
%
%
%

\end{document}